\theoremstyle{plain}
\newtheorem{theorem}{Theorem}
\newtheorem{conjecture}[theorem]{Conjecture}
\newtheorem{lemma}[theorem]{Lemma}
\theoremstyle{definition}
\theoremstyle{remark}
\newtheorem*{remark*}{Remark}
\DeclareMathOperator{\pr}{pr}
\DeclareMathOperator{\conv}{conv}
\title[A cap covering theorem]{A cap covering theorem}
\author[A.~Polyanskii]{{A.~Polyanskii}}
\address{Alexandr Polyanskii,
\newline\hphantom{iii} \href{https://mipt.ru/english/}{MIPT}
}
\email{\href{mailto:alexander.polyanskii@yandex.ru}{alexander.polyanskii@yandex.ru}}
\urladdr{\url{http://polyanskii.com}}
\thanks{The author was supported by the Russian Federation Government through Grant No. 075-15-2019-1926. The author is a Young Russian Mathematics award winner and would like to thank its sponsors and jury. 
}
\begin{document}
\thispagestyle{empty}
\begin{abstract}

A \textit{cap} of spherical radius $\alpha$ on a unit $d$-sphere $S$ is the set of points within spherical distance $\alpha$ from a given point on the sphere. Let $\mathcal F$ be a finite set of caps lying on $S$. We prove that if no hyperplane through the center of \( S \) divides $\mathcal F$ into two non-empty subsets without intersecting any cap in $\mathcal F$, then there is a cap of radius equal to the sum of radii of all caps in $\mathcal F$ covering all caps of $\mathcal F$ provided that the sum of radii is less $\pi/2$.

This is the spherical analog of the so-called Circle Covering Theorem by Goodman and Goodman and the strengthening of Fejes T\'oth's zone conjecture proved by Jiang and the author.

\end{abstract}
\maketitle
\section{Introduction}

A finite collection $\mathcal K$ of convex bodies in $\mathbb R^d$ is called \textit{non-separable} if any hyperplane intersecting $\conv \bigcup \mathcal K$ meets a convex body of $\mathcal K$. The following theorem was conjectured by Erd\H{o}s and proved by Goodman and Goodman~\cite{Goodman1945}. 

\begin{theorem}[A.\;W.~Goodman, R.\;E.~Goodman, 1945]
    \label{theorem:goodman}
    If a finite collection of disks of radii $r_1, \dots, r_n$ is non-separable, then it is possible to cover them by a disk of radius $r_1+\dots+r_n$.
\end{theorem}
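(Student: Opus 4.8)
The plan is to reduce the covering statement to the nonemptiness of a single intersection and then study the smallest enclosing disk of the union. Writing $c_1,\dots,c_n$ for the centers and $R=r_1+\dots+r_n$, a disk $B(p,R)$ covers $D_i$ iff $\dist(p,c_i)+r_i\le R$, i.e. $\dist(p,c_i)\le R-r_i$. Hence the theorem is equivalent to
\[
\bigcap_{i=1}^n B(c_i,\,R-r_i)\neq\varnothing,
\]
equivalently to $\rho^\ast:=\min_p \max_i(\dist(p,c_i)+r_i)\le R$, the minimum being attained since $f(p):=\max_i(\dist(p,c_i)+r_i)$ is convex and coercive. Its minimizer $p^\ast$ is exactly the center of the smallest enclosing disk $D^\ast=B(p^\ast,\rho^\ast)$ of $\conv\bigcup_i D_i$.

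Next I would convert non-separability into a metric inequality. A line with unit normal $\theta$ separates the family precisely when the orthogonal projections of the $D_i$ onto the $\theta$-axis leave a gap; so non-separability means that for every direction $\theta$ these projections form a single interval. Being a union of $n$ subintervals of total length $\sum_i 2r_i=2R$, that interval --- the width of $\conv\bigcup_i D_i$ in direction $\theta$ --- has length at most $2R$. Applying this in the direction $c_i-c_j$ gives the pairwise bound $\dist(c_i,c_j)\le 2R-r_i-r_j$; equivalently the disks $B(c_i,R-r_i)$ pairwise intersect.

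I would then analyze $p^\ast$. By convexity, $0$ lies in the convex hull of the unit vectors $u_i=(p^\ast-c_i)/\dist(p^\ast,c_i)$ over the active disks (those with $\dist(p^\ast,c_i)+r_i=\rho^\ast$), and by Carath\'eodory at most three of them suffice (the degenerate case $p^\ast=c_i$ forces $\rho^\ast=r_i\le R$). If two active disks give antipodal directions $u_1=-u_2$, then $p^\ast$ lies on the segment $c_1c_2$ and $\dist(c_1,c_2)=2\rho^\ast-r_1-r_2$; projecting onto that segment's direction, the width is at least $\dist(c_1,c_2)+r_1+r_2=2\rho^\ast$, so $2\rho^\ast\le 2R$ and $\rho^\ast\le R$. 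This settles the two-contact case and exhibits tightness (a collinear tangent chain gives $\rho^\ast=R$).

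The remaining three-contact case, in which the three touching points of $D^\ast$ strictly surround $p^\ast$, is the main obstacle. Here no direction presents two disks capping a full diameter of $D^\ast$, and the width bound alone yields only $\rho^\ast\le\tfrac43 R$; indeed $\conv\bigcup_i D_i$ could otherwise be as round as a Reuleaux triangle, so a genuinely two-dimensional use of non-separability is unavoidable. I would attack this through the minimax dual $\rho^\ast=\max_{\lambda}\bigl(\min_p\sum_i\lambda_i\dist(p,c_i)+\sum_i\lambda_i r_i\bigr)$ over probability weights $\lambda$, which reduces the theorem to the sharp estimate $\min_p\sum_i\lambda_i\dist(p,c_i)\le\sum_i\lambda_i(R-r_i)$ for the weighted Fermat--Weber value; alternatively, via an extremal/perturbation argument that deforms a three-contact configuration, keeping it non-separable, toward a two-contact one without decreasing $\rho^\ast$. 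Upgrading the one-dimensional projection data into a bound matching $R$ rather than $\tfrac43 R$ is where the real work lies, and where I expect the argument to be most delicate.
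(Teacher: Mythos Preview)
The paper does not actually prove this statement: Theorem~\ref{theorem:goodman} is quoted as the 1945 result of A.\,W.~Goodman and R.\,E.~Goodman and is used only as motivation for the spherical analogue. So there is no ``paper's own proof'' to compare against. That said, your proposal is not a complete proof, and it misses the classical idea.

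Your reduction to $\bigcap_i B(c_i,R-r_i)\neq\varnothing$ and the width bound (every direction has width $\le 2R$) are correct and standard. The diametral two-contact case is fine. But you yourself flag the three-contact case as ``the main obstacle'' and leave it open; the Fermat--Weber duality you write down is a restatement, not a proof, and the proposed deformation toward a two-contact configuration is only a hope. As you note, width $\le 2R$ alone cannot close the gap (Reuleaux obstructions give only $\rho^\ast\le \tfrac{2}{\sqrt 3}R$), so something genuinely different is needed.

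The missing idea is to \emph{name the center explicitly} rather than analyze the minimax center. Goodman and Goodman take the weighted centroid
\[
c^\ast=\frac{1}{R}\sum_{i=1}^n r_i\,c_i,\qquad R=\sum_i r_i,
\]
and show $B(c^\ast,R)\supseteq\bigcup_i D_i$. Since projections commute with this centroid, it suffices to prove the one-dimensional lemma: if closed intervals $I_i=[c_i-r_i,c_i+r_i]$ have connected union $[m,M]$, then $[c^\ast-R,c^\ast+R]\supseteq[m,M]$. Ordering so that the right endpoints satisfy $b_1\le\cdots\le b_n=M$, the portion $(b_k,M]$ is covered only by intervals with larger right endpoint, hence
\[
M-b_k\le 2\sum_{j>k} r_j\quad\text{for every }k,
\]
and summing $r_k(M-b_k)$ gives $\sum_k r_k(M-c_k)\le R^2$, i.e.\ $M\le c^\ast+R$; the bound $m\ge c^\ast-R$ is symmetric. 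Applying this in every direction $\theta$ (non-separability makes the projected intervals connected) yields $B(c^\ast,R)\supseteq D_j$ for all $j$. This bypasses the two-/three-contact dichotomy entirely and is the argument the paper alludes to when it says ``using the argument of Goodman and Goodman''.
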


Using the argument of Goodman and Goodman, it is easy to prove the high-dimensional analog of Theorem~\ref{theorem:goodman}. Moreover, Bezdek and Langi~\cite{Bezdek2016} noticed that the same argument works for a non-separable collection of homothets of a centrally symmetric convex body. Also, they investigated a similar question on covering of a non-separable collection of positive homothets of a convex body that is not necessary centrally symmetric. Later Akopyan, Balitskiy, and Grigorev~\cite{Akopyan2017} improved their result. The aim of the current note is to prove the spherical analog of Theorem~\ref{theorem:goodman}; see~\cite[Conjecture~3]{Jiang2017}.

In order to state our result, we need several definitions. Denote by $S$ the unit $d$-sphere embedded in $\mathbb R^{d+1}$ centered at the origin. A \textit{cap} of spherical radius $\alpha$ on $S$ is defined as the set of points within spherical distance $\alpha$ from a given point on the sphere. A \textit{great sphere} is the intersection of $S$ and a hyperplane passing through the origin. We call a great sphere \textit{avoiding} for a collection of caps if it does not intersect any cap in this collection. A finite collection of caps is called \textit{non-separable} if no avoiding great sphere divides the collection into two non-empty sets.

\begin{theorem}
\label{theorem:mainresult}
Let $\mathcal F$ be a non-separable collection of caps of spherical radii $\alpha_1, \dots, \alpha_n$. If $\alpha_1+\dots+\alpha_n < \pi /2$, then $\mathcal F$ can be covered by one cap of radius $\alpha_1+\dots+\alpha_n$.
\end{theorem}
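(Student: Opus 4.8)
The plan is to follow the strategy behind Theorem~\ref{theorem:goodman}, adapted to the sphere. Write $R=\alpha_1+\dots+\alpha_n$ and, for $p\in S$ and $r\in[0,\pi/2]$, let $D(p,r)$ be the closed cap of radius $r$ about $p$; write $p_i$ for the centre of the $i$-th cap of $\mathcal F$. A cap $D(q,R)$ contains every cap of $\mathcal F$ exactly when $\dist(q,p_i)\le R-\alpha_i$ for all $i$, so it suffices to prove $\bigcap_{i=1}^{n}D(p_i,R-\alpha_i)\neq\varnothing$; here each radius $R-\alpha_i=\sum_{j\ne i}\alpha_j$ lies in $[0,\pi/2)$, so these caps are geodesically convex. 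Dually — the polar dual of a cap of radius $r$ about $p$ is the cap of radius $\pi/2-r$ about $p$ — this is the same as asking that $\bigcup_{i=1}^{n}D\bigl(p_i,\tfrac{\pi}{2}-\sum_{j\ne i}\alpha_j\bigr)$ lie in a closed hemisphere, a form in which separating great spheres appear naturally.

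First I would reduce to a hemisphere. The great spheres meeting $D(p_i,\alpha_i)$ form a zone of width $2\alpha_i$ about the polar great sphere of $p_i$; the total width is $2R<\pi$, so by Fejes T\'oth's zone theorem these zones do not cover the space of great spheres, and some great sphere avoids every cap of $\mathcal F$. Non-separability forces all caps of $\mathcal F$ to lie strictly on one side of it, hence inside an open hemisphere. Consequently $x\mapsto\max_i(\dist(x,p_i)+\alpha_i)$ attains its minimum $\rho$ at some point $o$, and $D(o,\rho)$ is a smallest cap containing $\bigcup\mathcal F$ (a ``circumscribed cap''), with $\rho<\pi/2$. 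The whole theorem reduces to proving $\rho\le R$, since then $q:=o$ works.

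As in the planar argument of~\cite{Goodman1945}, I would study $o$ through the active set $A=\{\,i:\dist(o,p_i)+\alpha_i=\rho\,\}$. Minimality of $o$ puts $0$ into the convex hull of the unit tangent vectors at $o$ pointing away from the centres $p_i$, $i\in A$; by Carath\'eodory one may then assume $|A|\le d+1$. When $|A|\le 2$ (in particular when $n=2$) this already yields $\rho\le R$: the two active centres then lie on a geodesic $\ell$ through $o$, one on each side, so the point of the $i$-th active cap farthest from $o$ lies on $\ell$ at distance $\rho$ from $o$, and these two points are at distance $2\rho$ on $\ell$. Projecting $S$ onto $\ell$ sends each cap $D(p_i,\alpha_i)$ to an interval of length at most $2\alpha_i$, and by non-separability these intervals cover a single interval — which contains the two projected farthest points, hence has length $\ge 2\rho$. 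Thus $2\rho\le 2\alpha_1+\dots+2\alpha_n=2R$. (For $n=2$ this is just the triangle inequality: two non-separable caps intersect, so $\dist(p_1,p_2)\le\alpha_1+\alpha_2$.)

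The core of the proof — and where I expect the real difficulty — is the inequality $\rho\le R$ when $|A|\ge 3$. A single projection is no longer enough: it yields only $\rho\le 2R$ (or $\rho\le\tfrac{2R}{1+\gamma}$, where $\gamma$ measures how nearly opposite, as seen from $o$, some pair of active farthest points is), which falls short of $\rho\le R$. One must exploit that non-separability forbids a gap in \emph{every} projection, and that the \emph{inactive} caps are precisely what must fill the near-gaps the active caps leave; concretely I would argue by induction on $n$, using the inductive hypothesis when the active subfamily is itself non-separable, and otherwise splitting it along an avoiding great sphere and accounting for the remaining caps so that the radii still sum to at most $R$. (Equivalently, in the dual picture: were $\bigcup_i D\bigl(p_i,\tfrac{\pi}{2}-\sum_{j\ne i}\alpha_j\bigr)$ not inside a closed hemisphere, one would try to extract from a great sphere avoiding all caps of $\mathcal F$ — furnished by the zone theorem — two caps of $\mathcal F$ on opposite sides of it, contradicting non-separability; making this robust is the same obstacle.) This combinatorial–geometric estimate linking ``smallest circumscribed cap'' to ``non-separable'' is exactly where the Goodman–Goodman argument does its real work, and the spherical version is the crux here.
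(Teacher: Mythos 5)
Your proposal is an outline rather than a proof: the case you yourself identify as the crux, namely $\rho\le R$ when the active set has three or more elements, is exactly the whole content of the theorem, and the induction you sketch for it is not an argument. If the active subfamily is non-separable you could indeed invoke induction (the minimal enclosing cap of the active caps is $D(o,\rho)$ by convexity of the distance functions inside the hemisphere), but when it is separable the ``accounting'' of the inactive caps that must bridge the gap is precisely the difficulty, and nothing in the proposal indicates how to charge their radii so that the total still bounds $\rho$. Moreover, the quantitative step you do spell out, for $|A|\le 2$, rests on a false claim: the nearest-point projection of a cap of radius $\alpha_i$ onto a great circle $\ell$ is an arc of length \emph{at least} $2\alpha_i$, with equality only when the centre lies on $\ell$ (a short computation with latitudes shows the projected arc strictly lengthens as the centre moves off $\ell$), so the ``intervals of total length $\le 2R$ must cover an interval of length $2\rho$'' bound goes the wrong way; there is also the glossed-over point that an uncovered point $q\in\ell$ only yields an avoiding great sphere if the antipode $-q$ is uncovered as well. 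This expansion under projection is exactly why naive Goodman--Goodman-style averaging/projection arguments fail on the sphere. (The hemisphere reduction via Theorem~\ref{theorem:tothconjecture} is fine, and the $n=2$ remark is salvageable, but these are the easy parts.)

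For comparison, the paper does not work with a minimal circumscribed cap at all. It dualizes: each cap is replaced by the complementary pair of polar caps, turning Theorem~\ref{theorem:mainresult} into Theorem~\ref{theorem:equivalent} about zones $Z_1,\dots,Z_n$ whose complement has at most one antipodal pair of components. The covering zone is then produced from the vector $\mathbf w=\sum\mathbf w(Z_i)$ of maximal norm in the Bang set $L=\{\sum\pm\mathbf w_i\}$, via Lemma~\ref{mainlemma:equivalent} (the dual of Lemma~4 of~\cite{Jiang2017}) together with Bang's Lemma and the analysis of the farthest-point regions $A_{\mathbf x}$, showing all $A_{\mathbf x}$ with $\mathbf x\ne\pm\mathbf w$ are empty. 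That Bang-set mechanism is the replacement for the combinatorial--geometric estimate you left open; without it, or some equally substantive substitute, the proposal does not prove the theorem.
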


Recall that a pair of antipodal caps can be viewed as the dual of a zone, where a \textit{zone} of width $2\alpha$ on $S$ is the set of points within spherical distance $\alpha$ from a given great sphere. (The projective duality in $\mathbb R^{d+1}$ interchanges a line through the origin with its orthogonal hyperplane through the origin, that is, a pair of antipodal points is the dual of a great sphere.) It is worth mentioning that Theorem~\ref{theorem:mainresult} for $\alpha_1+\dots +\alpha_n=\pi/2$ under some technical assumptions is a corollary of so-called Fejes T\'oth's zone conjecture~\cite{Tth1973}
that is proved in~\cite[Theorem~1 and Corollary~3]{Jiang2017}; see also the recent work~\cite{ortegamoreno2019optimal} of Ortega-Moreno, where the conjecture is confirmed for zones of the same width.

\begin{theorem}[Jiang, Polyanskii, 2017]
    \label{theorem:tothconjecture}
    Given a collection of zones covering \( S \), the sum of width of all zones in the collection is at least \( \pi \).
\end{theorem}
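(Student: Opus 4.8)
The plan is to argue by contraposition: I will show that if the total half-width $\sum_{i=1}^n\alpha_i$ is strictly less than $\pi/2$ (equivalently, if the sum of widths $\sum_i 2\alpha_i$ is strictly less than $\pi$), then the zones fail to cover $S$. Writing things in coordinates, a zone of width $2\alpha_i$ whose axis great sphere is $S\cap v_i^{\perp}$, for a unit vector $v_i\in\mathbb R^{d+1}$, is exactly $Z_i=\{x\in S:|\langle x,v_i\rangle|\le \sin\alpha_i\}$. Covering therefore fails precisely when there is a unit vector $x$ with $|\langle x,v_i\rangle|>\sin\alpha_i$ for every $i$, so the whole task reduces to exhibiting a single such \emph{uncovered} point (the collection being finite). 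I would first record the one-dimensional model. On a great circle parametrised by $\theta$, the zone $Z_i$ cuts out the set $\{|\cos(\theta-\phi_i)|\le \sin\alpha_i/r_i\}$, of arc length $4\arcsin(\sin\alpha_i/r_i)\ge 4\alpha_i$, where $r_i\le 1$ is the length of the projection of $v_i$ onto the plane of the circle.

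When all the $v_i$ are coplanar (the \emph{prismatic} case) one has $r_i=1$, the arc lengths are exactly $4\alpha_i$, and subadditivity of length forces $2\pi=|C|\le\sum_i 4\alpha_i$, that is $\sum_i\alpha_i\ge\pi/2$. Dually, the uniform measure $\mu$ on this circle has arcsine marginals, so $\mu(Z_i)=2\alpha_i/\pi$ exactly and $\sum_i\mu(Z_i)\ge 1$ gives the bound. This is the entire content in the prismatic case. The difficulty — and the reason the statement is not merely Bang's flat plank theorem — is that for non-coplanar normals every great circle overshoots ($r_i<1$ gives length $>4\alpha_i$), and every uniform averaging points the wrong way: the area estimate $\sum_i|Z_i|\ge|S|$ yields only something like $\sum_i\sin\alpha_i\ge 1$, which is too weak. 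Moreover, no fixed probability measure on $S^d$ with $d\ge 2$ can have arcsine marginals in all directions, since that would make it isotropic, hence rotation-invariant, hence uniform, whose marginals are not arcsine. Thus the heart of the matter must treat the full-dimensional configuration directly.

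My main approach for general $d$ is a spherical analogue of Bang's sign selection: build the uncovered point by a greedy process whose total angular expenditure is the budget $\sum_i 2\alpha_i<\pi$. Concretely, I would process the zones in a chosen order, maintaining a current unit vector together with the list of inequalities already secured, and at each step rotate the current point by an angle of order $\alpha_i$ in the plane spanned by the point and $v_i$, picking the direction that pushes it strictly outside $Z_i$ while disturbing the plane as little as possible. Because the point stays within spherical distance $\sum_i\alpha_i<\pi/2$ of its start, it never reaches the antipode and retains room to manoeuvre; a compactness / intermediate-value argument over the finitely many sign choices (or over a continuous homotopy of the construction) should then certify a terminal point lying strictly outside all $n$ zones, contradicting coverage.

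The step I expect to be the main obstacle is exactly the control of interactions. In the Euclidean plank problem the $2^n$ candidate points $\sum_i\epsilon_i(w_i/2)v_i$, indexed by signs $\epsilon_i=\pm1$ with $w_i$ the plank widths, are governed by \emph{linear} functionals, and Bang's inductive sign choice is clean; here the quantities $\langle x,v_j\rangle$ transform nonlinearly under rotation and the rotations do not commute, so pushing out of $Z_i$ can erode the margins already won for $j<i$. The crux is therefore a quantitative budget-tracking lemma: each rotation can be performed so that the accumulated damage to all previous margins is dominated by the remaining budget $\pi-\sum_{k\le i}2\alpha_k$, keeping the running point inside a cap of radius $\sum_i\alpha_i<\pi/2$ about a feasible direction. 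Establishing this inequality, together with the reduction of a general configuration to the prismatic model handled above, is where essentially all the work lies.
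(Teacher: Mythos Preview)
This theorem is quoted from \cite{Jiang2017} rather than proved in the present paper, but the proof of Theorem~\ref{theorem:equivalent} here contains it as the degenerate case $S\setminus\bigcup_i Z_i=\emptyset$, and that proof shows why your programme is headed the wrong way. Your proposal is a plan with an explicitly unfilled gap: the ``budget-tracking lemma'' that would let a greedy sequence of spherical rotations escape each $Z_i$ without re-entering the earlier ones is never formulated, and there is no evident reason it should hold --- two zones with nearly parallel normals compete for the same angular budget, and the non-commutativity you correctly flag has no cure in a sequential scheme. You diagnose the obstacle accurately; what is missing is the idea that gets around it.

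The actual argument avoids rotations altogether and stays linear in $\mathbb R^{d+1}$. With $\mathbf w_i=(\sin\alpha_i)\,v_i$ one forms the Bang set $L=\bigl\{\sum_i\pm\mathbf w_i\bigr\}$ and takes $\mathbf w\in L$ of maximum Euclidean norm; Bang's lemma (Lemma~\ref{lemma:farthestpoint}) then gives $|\langle\mathbf w,v_i\rangle|\ge\sin\alpha_i$ for every $i$ simultaneously --- this is precisely the Euclidean sign selection you already cite for planks, applied verbatim to the vectors $\mathbf w_i$, so the ``nonlinear interactions'' you fear never arise. The only genuinely spherical ingredient is a dichotomy on $|\mathbf w|$. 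If $|\mathbf w|\le\sin\bigl(\sum_i\alpha_i\bigr)<1$, then $\mathbf w/|\mathbf w|\in S$ satisfies $|\langle\mathbf w/|\mathbf w|,v_i\rangle|>\sin\alpha_i$ for all $i$ and is the uncovered point. If instead $|\mathbf w|>\sin\bigl(\sum_i\alpha_i\bigr)$, a merging lemma (Lemma~\ref{mainlemma:equivalent}, the dual of \cite[Lemma~4]{Jiang2017}) shows that some minimal subfamily $I$ can be replaced by a single zone of width $2\sum_{i\in I}\alpha_i$ containing all of $\{Z_i:i\in I\}$; this reduces $n$ while preserving the total width and the covering. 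The interactions are thus handled by induction on $n$ via this merging step, not by tracking a rotation budget on the sphere.
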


The proof of Theorem~\ref{theorem:mainresult} heavily relies on ideas developed in the context of studying planks~\cites{tarski1932uwagi, Bang1951, Ball1991} covering a convex body, where a \textit{plank} (or \textit{slab}, or \textit{strip}) of width $w$ is a set of all points lying between two parallel hyperplanes in $\mathbb R^d$ at distance $w$. The connection between planks and zones is obvious: A zone of width $2\alpha$ is the intersection of $S$ and the centrally symmetric about the origin plank of width $2\sin \alpha$. Another key idea of our proof is considering the farthest point Voronoi diagram of the so-called Bang set (see~\eqref{equation:Bang_set}). This idea appeared in the very recent work~\cite{balitskiy2020multiplank} of Balitskiy, where trying to understand the proof of the theorem of Kadets~\cite{Kadets2005} on the sum of inradii of convex bodies in a collection covering a unit ball, he introduced a new concept of \textit{multiplanks}. Since we do not use this involved concept in its greatest generality, we decided to give only a short remark in the discussion section about relation of our proof to this concept. Nevertheless, we highly recommend an interested reader to understand it: We believe that it will be very helpful in proving new results on coverings.

\subsection*{Acknowledgements} The author thanks Alexey Balitskiy for the fruitful discussions of his work~\cite{balitskiy2020multiplank}. Besides, the author thanks the anonymous referees whose remarks helped fix some errors and improve the presentation.

\section{Covering zones instead of caps}
\label{section:proof}

From now on, we consider only open centrally symmetric about the origin planks, and thus we omit the phase `open centrally symmetric about the origin' most of the time. For a plank \( P \), denote by \( \mathbf w (P) \) a vector \( \mathbf w \) such that \( P=\{\mathbf x\in \mathbb R^d: | \langle \mathbf x, \mathbf w \rangle | < \langle \mathbf w, \mathbf w \rangle  \}\). For a zone $Z$, set $\mathbf w(Z)=\mathbf w(P)$, where $P$ is the open plank such that $Z$ coincides with the closure of $S\cap P$.

The main tool of the current paper is the following dual reformulation of \cite[Lemma~4]{Jiang2017}.

\begin{lemma}
\label{mainlemma:equivalent}
Let $Z_1, \dots, Z_n\subset S$ be zones of width $2\alpha_1,\dots, 2\alpha_n$, respectively, such that $\alpha=\alpha_1+\dots+\alpha_n\leq \pi/2$. Set $\mathbf w_i:=\mathbf w(Z_i)$. If  $\mathbf w=\sum_{i=1}^n \mathbf w_i$ satisfies the following inequalities
\[
|\mathbf w| \geq  \sin \alpha \text{ and }|\mathbf w-\mathbf w_i| \leq  \sin (\alpha -\alpha_i)
\text{ for all } i\in [n],
\]
then there is a zone of width $2 \alpha$  covering $Z_1,\dots, Z_n$.
\end{lemma}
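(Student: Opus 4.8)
The plan is to exhibit the covering zone explicitly and to reduce its correctness to a single one‑variable trigonometric inequality. First I pass to the pole vectors. Since the plank $P_i$ with $Z_i=\overline{S\cap P_i}$ has width $2\langle\mathbf w_i,\mathbf w_i\rangle/|\mathbf w_i|=2|\mathbf w_i|$ and equals a plank of width $2\sin\alpha_i$, we have $|\mathbf w_i|=\sin\alpha_i$; and for $p\in S$ one has $p\in Z_i$ exactly when $|\langle p,\mathbf w_i\rangle|\le\sin^2\alpha_i$, which (writing $\langle p,\mathbf w_i\rangle=\sin\alpha_i\cdot|\mathbf w_i|^{-1}\langle p,\mathbf w_i\rangle$ and recognizing $|\mathbf w_i|^{-1}\langle p,\mathbf w_i\rangle$ as the cosine of the angle between $p$ and $\mathbf w_i$) is the same as saying that the spherical distance from $p$ to the great sphere $G_i:=S\cap\mathbf w_i^{\perp}$ is at most $\alpha_i$. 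If $\alpha=0$ there is nothing to prove, so assume $\alpha>0$, whence $|\mathbf w|\ge\sin\alpha>0$ and we may set
\[
\mathbf v:=\frac{\sin\alpha}{|\mathbf w|}\,\mathbf w ,\qquad G:=S\cap\mathbf v^{\perp}.
\]
Note $|\mathbf v|=\sin\alpha$, so the zone $Z$ with $\mathbf w(Z)=\mathbf v$ has width $2\alpha$, and $Z=\{p\in S:\dist(p,G)\le\alpha\}$. It remains to show $Z_i\subseteq Z$ for each $i$.

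The key reduction is soft: letting $\beta_i\in[0,\tfrac\pi2]$ be the angle between the lines $\mathbb R\mathbf v$ and $\mathbb R\mathbf w_i$ (so $\cos\beta_i=|\langle\mathbf v,\mathbf w_i\rangle|/(|\mathbf v|\,|\mathbf w_i|)$), I claim $\beta_i\le\alpha-\alpha_i$ implies $Z_i\subseteq Z$. Indeed $\dist(q,G)=\arcsin\bigl(|\langle q,\mathbf v\rangle|/|\mathbf v|\bigr)$ for $q\in S$; as $q$ ranges over $G_i$, which is the unit sphere of the hyperplane $\mathbf w_i^{\perp}$, the number $|\langle q,\mathbf v\rangle|$ attains its maximum at the length of the orthogonal projection of $\mathbf v$ onto $\mathbf w_i^{\perp}$, namely $|\mathbf v|\sin\beta_i$; hence $\dist(q,G)\le\beta_i$ for all $q\in G_i$. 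So if $p\in Z_i$ and $q\in G_i$ realizes $\dist(p,q)=\dist(p,G_i)\le\alpha_i$, the spherical triangle inequality gives $\dist(p,G)\le\dist(p,q)+\dist(q,G)\le\alpha_i+\beta_i\le\alpha$, i.e. $p\in Z$.

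It therefore suffices to prove $\beta_i\le\alpha-\alpha_i$, equivalently $|\langle\mathbf v,\mathbf w_i\rangle|\ge|\mathbf v|\,|\mathbf w_i|\cos(\alpha-\alpha_i)$, equivalently (after clearing the positive scalar $\sin\alpha/|\mathbf w|$) $\langle\mathbf w,\mathbf w_i\rangle\ge|\mathbf w|\sin\alpha_i\cos(\alpha-\alpha_i)$; note the right‑hand side is nonnegative, so this also forces $\langle\mathbf v,\mathbf w_i\rangle>0$ and no absolute value is lost. Expanding $|\mathbf w-\mathbf w_i|^2\le\sin^2(\alpha-\alpha_i)$ and using $|\mathbf w_i|^2=\sin^2\alpha_i$ gives
\[
\langle\mathbf w,\mathbf w_i\rangle\ \ge\ \tfrac12\bigl(|\mathbf w|^2+\sin^2\alpha_i-\sin^2(\alpha-\alpha_i)\bigr),
\]
so it is enough to check, writing $t:=|\mathbf w|\ge\sin\alpha$,
\[
t^2-2\sin\alpha_i\cos(\alpha-\alpha_i)\,t+\sin^2\alpha_i-\sin^2(\alpha-\alpha_i)\ \ge\ 0 .
\]
A short computation with the sum‑to‑product and product‑to‑sum identities identifies the left‑hand side with $(t-\sin\alpha)\bigl(t-\sin(2\alpha_i-\alpha)\bigr)$: the two roots sum to $2\sin\alpha_i\cos(\alpha-\alpha_i)$ and multiply to $\sin^2\alpha_i-\sin^2(\alpha-\alpha_i)$. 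Since $-\tfrac\pi2\le-\alpha\le 2\alpha_i-\alpha\le\alpha_i\le\alpha\le\tfrac\pi2$ and $\sin$ is increasing on $[-\tfrac\pi2,\tfrac\pi2]$, we get $\sin(2\alpha_i-\alpha)\le\sin\alpha\le t$, so both factors are $\ge0$. This finishes the proof.

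I do not expect a real obstacle here. The content is just the observation that the two hypotheses on $\mathbf w$ are precisely calibrated so that its normalized direction $\mathbf v$ lies within angular distance $\alpha-\alpha_i$ of each axis $\mathbb R\mathbf w_i$ — the displayed quadratic inequality is sharp, with equality when $|\mathbf w-\mathbf w_i|=\sin(\alpha-\alpha_i)$ and $|\mathbf w|=\sin\alpha$ — after which the containment of zones is the one‑line triangle‑inequality argument above. As an alternative one could skip the computation and obtain the lemma from its primal form \cite[Lemma~4]{Jiang2017} through the projective duality recalled in the introduction, but the self‑contained argument above is at least as short.
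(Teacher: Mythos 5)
Your argument is correct, and it takes a genuinely different route from the paper: the paper never proves Lemma~\ref{mainlemma:equivalent} internally at all, but imports it as the dual reformulation of \cite[Lemma~4]{Jiang2017} --- the alternative you only mention in your closing sentence. Your proof is self-contained: you exhibit the covering zone explicitly as the one with defining vector $\mathbf v=(\sin\alpha/|\mathbf w|)\,\mathbf w$, reduce $Z_i\subseteq Z$ to the angular estimate $\beta_i\le\alpha-\alpha_i$ via the spherical triangle inequality through a closest point of $G_i$, and then derive that estimate from the two hypotheses. I checked the key computation: the quadratic $t^2-2\sin\alpha_i\cos(\alpha-\alpha_i)\,t+\sin^2\alpha_i-\sin^2(\alpha-\alpha_i)$ does factor as $(t-\sin\alpha)\bigl(t-\sin(2\alpha_i-\alpha)\bigr)$ (sum and product of roots match by the sum-to-product and product-to-sum identities), and both factors are nonnegative for $t=|\mathbf w|\ge\sin\alpha$ since $-\pi/2\le 2\alpha_i-\alpha\le\alpha\le\pi/2$ and $\sin$ is increasing there; combined with $\langle\mathbf w,\mathbf w_i\rangle\ge\tfrac12\bigl(|\mathbf w|^2+\sin^2\alpha_i-\sin^2(\alpha-\alpha_i)\bigr)$ this yields $\cos\beta_i\ge\cos(\alpha-\alpha_i)$, and the remaining step is the metric triangle inequality on $S$, which is valid. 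What your route buys is a short, sharp, elementary proof making the lemma independent of \cite{Jiang2017}; what the paper's route buys is brevity and the explicit link to the zone-conjecture machinery it builds on. One small point to tidy: if some $\alpha_i=0$ (so $\mathbf w_i=\mathbf 0$) your angle $\beta_i$ is undefined; under the paper's conventions such a $Z_i$ is degenerate (the plank $P_i$ is empty, so $Z_i\subseteq Z$ vacuously), so either exclude zero widths explicitly or add the one-sentence remark, just as you did for the case $\alpha=0$.
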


We first show that the following theorem (Theorem~\ref{theorem:equivalent}) implies Theorem~\ref{theorem:mainresult}. We remark that Theorem~\ref{theorem:mainresult} also easily implies Theorem~\ref{theorem:equivalent}.
\begin{theorem}
\label{theorem:equivalent}
Let $Z_1, \dots, Z_n\subset S$ be zones of width $2\alpha_1, \dots, 2\alpha_n$, respectively, such that $\alpha_1+\dots+\alpha_n< \pi/2$. If $S\setminus \bigcup_{i=1}^n Z_i$ has at most one pair of two antipodal connected components, then the zones $Z_1, \dots, Z_n$ can be covered by a zone of width $2\alpha_1+\dots+2\alpha_n$.
\end{theorem}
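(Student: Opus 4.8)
The plan is to derive Theorem~\ref{theorem:equivalent} from Lemma~\ref{mainlemma:equivalent}. Set $\mathbf w_i:=\mathbf w(Z_i)$, a vector of length $\sin\alpha_i$ along the axis of $Z_i$ (defined up to sign), write $\hat{\mathbf w}_i:=\mathbf w_i/\sin\alpha_i$ and $\alpha:=\alpha_1+\dots+\alpha_n$. It then suffices to exhibit signs $\varepsilon_i\in\{\pm1\}$ for which $\mathbf w:=\sum_i\varepsilon_i\mathbf w_i$ obeys $|\mathbf w|\ge\sin\alpha$ and $|\mathbf w-\varepsilon_i\mathbf w_i|\le\sin(\alpha-\alpha_i)$ for every $i$, since Lemma~\ref{mainlemma:equivalent} then supplies the covering zone. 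Before that I would repeatedly discard any zone contained in the union of the remaining ones: this does not change $S\setminus\bigcup_iZ_i$, only decreases $\alpha$, and a zone of the smaller radius sits inside one of radius $\alpha$, so it is harmless; in particular it disposes of degenerate families such as several coaxial zones.

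I would then reinterpret the hypothesis. Since $\mathbf x\in Z_i$ if and only if $|\langle\mathbf x,\mathbf w_i\rangle|\le\sin^2\alpha_i$, a condition depending only on the line through $\mathbf x$, the set $S\setminus\bigcup_iZ_i$ is the full preimage under the double cover $S\to\mathbb{RP}^d$ of $K:=\bigcap_i B\!\big([\hat{\mathbf w}_i],\tfrac\pi2-\alpha_i\big)$. As $K$ lies in a ball of radius $<\tfrac\pi2$, hence in a contractible subset of $\mathbb{RP}^d$, the cover is trivial over each component of $K$, so that component lifts to two antipodal components of $S\setminus\bigcup_iZ_i$; thus the hypothesis says precisely that $K$ is connected. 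Moreover $K\neq\varnothing$, for otherwise the $Z_i$ cover $S$ and Theorem~\ref{theorem:tothconjecture} forces $2\alpha\ge\pi$, against $\alpha<\tfrac\pi2$. Picking a component $C$ of $S\setminus\bigcup_iZ_i$ and flipping the $\mathbf w_i$ appropriately, I may assume $\langle\mathbf y,\mathbf w_i\rangle>\sin^2\alpha_i$ for all $\mathbf y\in C$ and all $i$; equivalently the vertex $b_{\mathbf 1}:=\sum_i\mathbf w_i$ of the Bang set $\mathcal B:=\{\sum_i\varepsilon_i\mathbf w_i:\varepsilon\in\{\pm1\}^n\}$ of~\eqref{equation:Bang_set} is the one exposed by every direction of $C$.

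The heart of the argument is a Bang-type selection of the signs using the farthest point Voronoi diagram of $\mathcal B$. I would choose a suitable point $\mathbf p$ --- the origin, or a point $t\mathbf y_0$ with $\mathbf y_0\in C$ a direction making $\sum_i\sin\alpha_i\sin\dist_S(\cdot,\hat{\mathbf w}_i^{\perp})$ as large as possible and $t$ calibrated --- inside a farthest-point cell $R_{\varepsilon^{*}}$, and put $\mathbf w:=\sum_i\varepsilon^{*}_i\mathbf w_i$. From $\mathbf p\in R_{\varepsilon^{*}}$ one reads off, for each $i$, that $\langle\mathbf w-\mathbf p,\varepsilon^{*}_i\mathbf w_i\rangle\ge\sin^2\alpha_i$, because flipping a single coordinate can only move $b_{\varepsilon^{*}}$ nearer to $\mathbf p$; combined with control of $|\mathbf w|$ this yields the leave-one-out bounds $|\mathbf w-\varepsilon^{*}_i\mathbf w_i|\le\sin(\alpha-\alpha_i)$ --- as one checks directly when $n=2$, where they follow from $|\mathbf w|\ge\sin\alpha$ by the law of sines. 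Hence the whole difficulty is the lower bound $|\mathbf w|\ge\sin\alpha$, and it is exactly here that connectedness of $K$ enters: for $n=2$ it is equivalent to the angle between the axes of $Z_1,Z_2$ being at most $\alpha_1+\alpha_2$, which is the hypothesis, and in general I would obtain it either by induction on $n$, deleting one zone while showing $K$ stays connected (which I expect to be the technical crux), or by a dual Fejes~T\'oth estimate: if every sign vector gave $\big|\sum_i\varepsilon_i\mathbf w_i\big|<\sin\alpha$, then $S$ could be covered by zones of total width $<\pi$, contradicting Theorem~\ref{theorem:tothconjecture}. The main obstacle, as always for plank-type theorems, is to calibrate the choice of $\mathbf p$ (hence of $\varepsilon^{*}$) so that $|\mathbf w|$ is at once $\ge\sin\alpha$ and small enough that $\mathbf w$ lies in all the balls $\bar B(\varepsilon^{*}_i\mathbf w_i,\sin(\alpha-\alpha_i))$: the crude estimates are mutually incompatible, and only the precise shape of $\mathcal B$ imposed by non-separability reconciles them.
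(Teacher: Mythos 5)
Your overall plan---choose signs $\varepsilon_i$ so that $\mathbf w=\sum_i\varepsilon_i\mathbf w_i$ satisfies \emph{both} hypotheses of Lemma~\ref{mainlemma:equivalent} for the whole family---cannot work, and the difficulty is not merely the calibration you acknowledge at the end. Take $n=3$ zones of equal half-width $\beta$ with $3\beta<\pi/2$, whose axes are pairwise at a small angle $\delta$ with $0<\delta\ll\beta$ (the three axes displaced from a common direction in different directions, so that none of the zones is contained in the union of the other two; your preprocessing removes nothing). Since $\delta<2\beta$, all mixed-sign intersections of the complementary caps are empty, so $S\setminus\bigcup Z_i$ is a single antipodal pair and the hypothesis of Theorem~\ref{theorem:equivalent} holds. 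But for any signs: if the three vectors $\varepsilon_i\mathbf w_i$ are not essentially parallel the sum has norm about $\sin\beta<\sin 3\beta$, so the first condition forces them nearly parallel, and then $|\mathbf w-\varepsilon_i\mathbf w_i|\ge 2\sin\beta\cos\delta>2\sin\beta\cos\beta=\sin 2\beta$, so the leave-one-out condition fails. Thus no sign choice makes Lemma~\ref{mainlemma:equivalent} applicable to all $n$ zones; the leave-one-out bounds are not a consequence of ``control of $|\mathbf w|$'' (they hold with equality only in the trivial case $n=2$), and the ``dual Fejes T\'oth estimate'' you invoke (all Bang vectors shorter than $\sin\alpha$ implies $S$ is covered with total width $<\pi$) is not justified and does not follow from Theorem~\ref{theorem:tothconjecture}.

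The paper's proof is structured precisely to avoid this dead end. It takes $\mathbf w$ of maximum norm in the Bang set $L$. If $|\mathbf w|>\sin\alpha$, it does \emph{not} treat the whole family: it picks a \emph{minimal} subset $I$ with $\bigl|\sum_{i\in I}\mathbf w_i\bigr|>\sin\bigl(\sum_{i\in I}\alpha_i\bigr)$---minimality is exactly what supplies the leave-one-out inequalities for $I$---applies Lemma~\ref{mainlemma:equivalent} to merge the zones of $I$ into one zone of the combined width, checks that the connectivity hypothesis survives, and repeats until $|\mathbf w|\le\sin\alpha$. In that main case the covering zone is produced by a different mechanism: the farthest-point Voronoi cells $A_{\mathbf x}$ of $L$ inside the unit ball lie outside all planks (Bang's Lemma~\ref{lemma:farthestpoint}, which is your inequality $\langle\mathbf w-\mathbf p,\varepsilon_i\mathbf w_i\rangle\ge\sin^2\alpha_i$), the at-most-one-antipodal-pair hypothesis forces $A_{\mathbf x}=\emptyset$ for all $\mathbf x\in L\setminus\{\pm\mathbf w\}$, and hence $A_{\mathbf w}$ is the full slab-complement slice $\{\mathbf t\in B:\langle\mathbf t,-\mathbf w\rangle\ge\langle\mathbf w,\mathbf w\rangle\}$, whose central projection is a cap of radius at least $\pi/2-\alpha$ avoiding every $Z_i$; the zone complementary to this cap and its antipode then covers all the zones. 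So your instinct to use farthest-point cells of the Bang set is the right one, but they must be used to exhibit a large empty cap in the complement, not to select signs for a single application of Lemma~\ref{mainlemma:equivalent}.
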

\begin{proof}[Theorem~\ref{theorem:equivalent} implies Theorem~\ref{theorem:mainresult}]
\phantom{\qedhere}
Suppose that spherical caps $D_1, \dots, D_n$ satisfy the conditions of Theorem~\ref{theorem:mainresult}. Let $D'_i$ be an open cap concentric with $D_i$ of spherical radius $\pi/2-\alpha_i$. By projective duality, the center of an open hemisphere  lies in $D_i'$ if and only if this hemisphere covers $D_i$. Since no avoiding great sphere divides $\{D_1, \dots, D_n\}$ into two non-empty sets, by projective duality, we get
\[
    \bigcap_{i=1}^n\varepsilon_i D_i'=\emptyset\text{ unless all } \varepsilon_i\in \{\pm1\} \text{ are the same}.
\]
Hence the zones $S\setminus \left(D_1' \cup (-D_1')\right)$,\dots, $S\setminus \left(D_n'\cup (-D_n')\right)$ satisfy the conditions of Theorem~\ref{theorem:equivalent}. Therefore, they can be covered by a zone $Z$ of width $2\alpha_1+\dots+2\alpha_n$. Since $S\setminus Z$ is the union of two antipodal open caps $D'$ and $-D'$ of radii $\pi/2-(\alpha_1+\dots+\alpha_n)$, without loss of generality we obtain
\[
D' \subseteq \left(\bigcap_{i=1}^n D_i'\right).
\]
By projective duality, the closed cap concentric with $D'$ of radius $\alpha_1+\dots+\alpha_n$ covers caps $D_1,\dots, D_n$. 
\end{proof}

Having shown that Theorem~\ref{theorem:equivalent} implies Theorem~\ref{theorem:mainresult}, we now proceed with the proof of Theorem~\ref{theorem:equivalent}.

\begin{proof}[Proof of Theorem~\ref{theorem:equivalent}]
Denote by $P_i$ the open plank such that the intersection of its closure with $S$ is $Z_i$. Set $\mathbf w_i=\mathbf w(P_i)$ for all $i\in[n]$.
Without loss of generality let us assume that $\mathbf w=\mathbf w_1+\dots +\mathbf w_n$ has the maximum norm among vectors of the \textit{Bang set}
\begin{equation}
    \label{equation:Bang_set}
    L=\left\{\sum_{i=1}^n\pm \mathbf w_i\right\}.
\end{equation}

First, let us show that we can assume $|\mathbf w|\leq \sin (\alpha_1+\dots+\alpha_n)$. Indeed, suppose that $|\mathbf w|>\sin (\alpha_1+\dots+\alpha_n)$. Thus the family of all subsets $J\subset [n]$ such that 
\[
\left |\sum_{i\in J}\mathbf w_i\right|>\sin \left( \sum_{i\in J} \alpha_i\right)
\]
is non-empty. Choose among them a minimal subset $I$. Since $|\mathbf w_i|=\sin \alpha_i$, we have $|I|>1$, and so we can apply Lemma~\ref{mainlemma:equivalent} to $I$ and cover zones $Z_i$, $i\in I$, by the zone $Z$. Replacing the zones $Z_i$, $i\in I$, by $Z$, we obtain a new collection of zones with the same sum of width covering the original collection of zones. Put $S\setminus Z_i=D_i\cup (-D_i)$ and $S \setminus Z = D \cup (-D)$, where $D_i$ and $D$ are open caps such that $D\subset \bigcap_{i\in I} D_i$. By the conditions of the theorem, we can assume that 
\[
\bigcap_{i=1}^n \varepsilon_i D_i=\emptyset \text{ unless all } \varepsilon_i \text{ are the same.}
\]
Since 
\[
\bigcap_{i\in [n]\setminus I} \varepsilon_i D_i \cap \varepsilon D \subseteq \left(\bigcap_{i\in [n]\setminus I} \varepsilon_i D_i\right) \cap \left(\bigcap_{i\in I} \varepsilon D_i\right),
\]
we get that the new collection of zones satisfies the conditions of the theorem. 
Therefore, we can reduce the number of zones and assume that $|\mathbf w|\leq \sin (\alpha_1+\dots+\alpha_n)$.

Next, we use the following version of so-called Bang's Lemma. It seems that it was first published in a similar form by Fenchel~\cite{fenchel1951th}.
\begin{lemma}[Bang's Lemma]
\label{lemma:farthestpoint}
If \( \mathbf t \) has maximum norm among elements of \( \mathbf t + \mathbf x - L \) for \( \mathbf x=\sum_{i=1}^n \varepsilon_i \mathbf w_i \in L \), then
\[ 
\mathbf t \in M_{\mathbf x}:=\bigcap_{i=1}^n\left\{\mathbf y\in \mathbb R^d: \langle \mathbf y, \mathbf -\varepsilon_i \mathbf w_i \rangle \geq \langle \mathbf w_i, \mathbf w_i \rangle \right\}\subseteq \mathbb R^d\setminus \bigcup_{i=1}^n P_i.
\]
\end{lemma}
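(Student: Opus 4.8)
The plan is to exploit the maximality of \( |\mathbf t| \) only against the single-coordinate sign flips of \( \mathbf x \). First I would unwind the set \( \mathbf t + \mathbf x - L \): writing \( \mathbf x = \sum_{i=1}^n \varepsilon_i \mathbf w_i \) and a general element of \( L \) as \( \sum_{i=1}^n \varepsilon_i' \mathbf w_i \) with \( \varepsilon_i' \in \{\pm 1\} \), we have \( \varepsilon_i - \varepsilon_i' \in \{0, 2\varepsilon_i\} \), so \( \mathbf x - L = \{ \sum_{i \in J} 2\varepsilon_i \mathbf w_i : J \subseteq [n] \} \). In particular \( \mathbf t \) itself lies in \( \mathbf t + \mathbf x - L \) (take \( J = \emptyset \)), so the hypothesis is consistent; and for each fixed index \( i \), taking \( J = \{i\} \) shows \( \mathbf t + 2\varepsilon_i \mathbf w_i \in \mathbf t + \mathbf x - L \). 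By the assumed maximality, \( |\mathbf t|^2 \geq |\mathbf t + 2\varepsilon_i \mathbf w_i|^2 \).

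Expanding the right-hand side and cancelling \( |\mathbf t|^2 \) gives \( 0 \geq 4\varepsilon_i \langle \mathbf t, \mathbf w_i \rangle + 4 \langle \mathbf w_i, \mathbf w_i \rangle \), that is, \( \langle \mathbf t, -\varepsilon_i \mathbf w_i \rangle \geq \langle \mathbf w_i, \mathbf w_i \rangle \). Since this holds for every \( i \in [n] \), we conclude \( \mathbf t \in M_{\mathbf x} \), which is the first assertion.

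For the inclusion \( M_{\mathbf x} \subseteq \mathbb R^d \setminus \bigcup_{i=1}^n P_i \), I would argue directly: if \( \mathbf y \in M_{\mathbf x} \) then for each \( i \), using \( |\varepsilon_i| = 1 \),
\[
|\langle \mathbf y, \mathbf w_i \rangle| = |\langle \mathbf y, -\varepsilon_i \mathbf w_i \rangle| \geq \langle \mathbf y, -\varepsilon_i \mathbf w_i \rangle \geq \langle \mathbf w_i, \mathbf w_i \rangle,
\]
where the last step also uses that \( \langle \mathbf w_i, \mathbf w_i \rangle = \sin^2 \alpha_i > 0 \), so the middle quantity is positive and the absolute value is attained. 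This is exactly the negation of the defining strict inequality of the open plank \( P_i \), hence \( \mathbf y \notin P_i \) for all \( i \), and therefore \( \mathbf y \notin \bigcup_{i=1}^n P_i \).

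There is no genuine obstacle here. The only point requiring a moment's care is the correct description of \( \mathbf x - L \) — one must flip exactly one sign at a time rather than invoke arbitrary elements of \( L \) — and the observation that the single-index flips already suffice, so the full maximality over \( L \) is more than enough. This is the classical Bang/Fenchel observation, and the argument above is the standard one; the work in the rest of the paper lies in how \( M_{\mathbf x} \) and the farthest-point Voronoi structure it generates are used, not in the proof of the lemma itself.
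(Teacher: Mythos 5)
Your proof is correct and follows essentially the same route as the paper: the paper establishes \( \mathbf t \in M_{\mathbf x} \) by exactly the same single-sign-flip comparison, arguing by contradiction that if \( \langle \mathbf t, -\varepsilon_i \mathbf w_i \rangle < \langle \mathbf w_i, \mathbf w_i \rangle \) then \( \mathbf t + 2\varepsilon_i \mathbf w_i \in \mathbf t + \mathbf x - L \) would be longer than \( \mathbf t \). The only difference is cosmetic: you phrase it as the contrapositive with the expansion written out, and you verify the inclusion \( M_{\mathbf x} \subseteq \mathbb R^d \setminus \bigcup_{i=1}^n P_i \) explicitly, which the paper treats as immediate from the definition of the open planks.
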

\begin{proof}
Suppose that $\mathbf t \in \{\mathbf y\in \mathbb R^d: \langle \mathbf y,  -\varepsilon_i \mathbf w_i \rangle < \langle \mathbf w_i, \mathbf w_i \rangle \}$ for some $i\in [n]$. Then the vector \( \mathbf t + 2\varepsilon_i \mathbf w_i\in \mathbf t + \mathbf x - L \) is longer than $\mathbf t$ (see Figure~\ref{hyperplane}), a contradiction.
\end{proof}

Let \( B \) be the unit open ball with center at the origin. For \( \mathbf x \in L \), consider the set
\[
    A_{\mathbf x}= \big\{ \mathbf t\in B: \mathbf t \text{ has maximum norm among of elements of } \mathbf t + \mathbf x - L \big\}. 
\]
Denote by \( \pr:\mathbb R^d\setminus \{\mathbf o\}\to S \) the central projection onto \( S \), where \( \mathbf o \) is the origin. By Lemma~\ref{lemma:farthestpoint}, we have 
\(
\pr (A_{\mathbf x})
\subseteq
\pr (M_{\mathbf x}\cap B )\subseteq \pr \big(B\setminus \bigcup_{i=1}^n P_i\big)
\).
Since \( \pr (M_{\mathbf x} \cap B)\) is an open set, we obtain \( \pr (A_{\mathbf x}) \) is a subset \( S\setminus \bigcup_{i=1}^n Z_i \). (Recall that \( Z_i \) is the closure of \( S\cap P_i \).)

We claim that
\begin{equation}
\label{equation:goal}
A_{\mathbf w}=\{ \mathbf t\in B: \langle \mathbf t, -\mathbf w \rangle \geq \langle \mathbf w, \mathbf w \rangle \} = \{\mathbf t \in B:\|\mathbf t\| \geq \| \mathbf t + 2\mathbf w\|\}.
\end{equation}
Before proving (\ref{equation:goal}), let us show how to finish the proof of the theorem. Since $|\mathbf w|\leq \sin (\alpha_1+\dots+\alpha_n)<1$, the set \( \pr (A_{\mathbf w})\) is an open cap $X$ of radius at least $\pi/2-(\alpha_1+\dots+\alpha_n)$ lying in $S\setminus \bigcup_{i=1}^n Z_i$; see Figure~\ref{figure2:proof_thm6}. Therefore, the zone $Z:=S\setminus (X\cup (-X))$ of width at most $2\alpha_1+\dots+2\alpha_n$ covers $Z_1,\dots, Z_n$. So, to finish the proof, it is enough to show (\ref{equation:goal}).

\begin{figure}[h]
  \begin{minipage}{.49\textwidth}
    \begin{tikzpicture}[scale=1.6]
      \coordinate (O) at (3.5,0);
      \coordinate (A) at (0.5,1.5);
      \coordinate (B) at (0.5,-1);
      \draw[darkgray, dashed] (0,-1.25) -- (4,-1.25);
      \draw[darkgray, dashed] (0,1.25) -- (4,1.25);
      \draw[darkgray, dashed] (0,0) -- (4,0);
      \draw[darkgray] (3.5, 0.2) -- (3.7, 0.2) -- (3.7, 0);
      \fill[darkgray] (O) circle (0.05);
      \fill[darkgray] (A) circle (0.05);
      \fill[darkgray] (B) circle (0.05);
      \draw[-latex, darkgray, shorten >= 0.05] (O) -- (A);
      \draw[-latex, darkgray, shorten >= 0.05] (B) -- (A);
      \draw[-latex, darkgray, shorten >= 0.05] (O) -- (3.5,1.25);
      \draw[-latex, darkgray, shorten >= 0.05] (O) -- (3.5,-1.25);
      \draw[-latex, darkgray, shorten >= 0.05] (O) -- (B);
      \node at (0.8, 1.2) [above right] {\( \mathbf t + 2 \varepsilon_i \mathbf w_i \)};
      \node at (0.6, -1.05) [right] {$\mathbf t$};
      \node at (-0.05,0.05) [above] {$2\varepsilon_i \mathbf w_i$};
      \node at (O) [below right] {\(\mathbf o\)};
      \node at (3.5,0.9) [left] {\( \varepsilon_i \mathbf w_i\)};
      \node at (3.5,-0.9) [left] {\( - \varepsilon_i \mathbf w_i\)};
      \node at (0,1.6) {};
      \node at (0,-1.5) {};
    \end{tikzpicture}    
\captionof{figure}{Proof of Lemma~\ref{lemma:farthestpoint}}
    \label{hyperplane}
  \end{minipage}
  \begin{minipage}{.45\textwidth}
  \centering
  \definecolor{mygray}{gray}{0.7}
\definecolor{mygrays}{gray}{0.8}
\begin{tikzpicture}[scale=0.5]
\draw [line width=0.5pt, gray] (0,0) circle (5);
\draw [line width=1pt, fill=mygrays] (3,4) arc (90-asin(3/5):90+asin(3/5):5cm) -- cycle;
\node at (0,4.5) { \( A_{-\mathbf w} \)};

\draw [line width=1pt, fill=mygray] (-3,-4) arc (270-asin(3/5):270+asin(3/5):5cm) -- cycle;
\node at (0,-4.5) {\( A_{\mathbf w} \)};

\draw [fill=black] (0,0) circle (1pt); 
\draw [fill=black] (0,4) circle (1pt) node[below right] {$\mathbf w$}; 
\draw [fill=black] (0,-4) circle (1pt) node[above right] {$-\mathbf w$};

\draw [->, thick, gray] (0,0) -- (0,4);
\draw [->, thick, gray] (0,0) -- (0,-4);
\end{tikzpicture}
\captionof{figure}{Proof of Theorem~\ref{theorem:equivalent}}
\label{figure2:proof_thm6}
  \end{minipage}

\end{figure}

First, we prove that
\begin{equation}
\label{equation:empty_condition}
A_{\mathbf x}=\emptyset \text{ for all } \mathbf x \in L\setminus \{ \pm \mathbf w\} .
\end{equation}
Indeed, using the fact that any two sets $M_{\mathbf x}$ for $\mathbf x\in L$ are strictly separated by one of the planks \( P_i \), any two sets \( \pr (A_{\mathbf x} ) \) for \( \mathbf x\in L \) are strictly separated subsets of \( S \setminus \bigcup_{i=1}^n Z_i \) consisting of at most two connected antipodal regions. Therefore, among sets \( A_{\mathbf x} \) for \( \mathbf x \) there are at most two non-empty sets. Since \( \mathbf w \in A_{- \mathbf w} \) and \( -\mathbf w\in A_{\mathbf w}\),  we conclude (\ref{equation:empty_condition}).

Next, we show that 
\begin{equation}
    \label{equation:include}
    \text{ if } \mathbf y - \mathbf w , \mathbf y + \mathbf w \in  B ,\text{ then } \mathbf y - \mathbf x \in B \text{ for all } \mathbf x \in L.
\end{equation} 
Suppose the contrary: For some \( \mathbf x \in L\setminus \{ \pm \mathbf w\} \), the point \( \mathbf y - \mathbf x \) does not lie in \( B \). Choosing a proper \( 0<\lambda<1\) and using convexity of \( B \), we get that \( \lambda \mathbf y -\mathbf x \in B \) for all \( \mathbf x\in L \) and \( \lambda \mathbf y - \mathbf x'\in A_{\mathbf x'}\) for some \( \mathbf x'\in L\setminus \{\pm \mathbf w\} \), a contradiction with (\ref{equation:empty_condition}).

Using~(\ref{equation:empty_condition}) and (\ref{equation:include}), we conclude that if the points \( \mathbf y - \mathbf w \) and \( \mathbf y + \mathbf w \) lie in \( B \), then \( \mathbf y - \mathbf w \in A_{\mathbf w } \) or \( \mathbf y + \mathbf w \in A_{-\mathbf w}\). Therefore, we obtain (\ref{equation:goal}).
\end{proof}

\section{Discussion}
\label{section:discussion}
First, we discuss the connection of our proof with the concept of multiplank.
\begin{remark*}
Using the terminology~\cite[Definition~2.1]{balitskiy2020multiplank} of Balitskiy, it is easy to see that the set 
\[ 
P_{L}:= \mathbb R^d\setminus \bigcup_{\mathbf x\in L} \big\{ \mathbf t\in \mathbb R^d: \mathbf t \text{ has maximum norm among of elements of } \mathbf t + \mathbf x - L \big\}
\]
is the open \textit{multiplank} of the set $L$ covering $\bigcup_{i=1}^n P_i$; see \cite[Proposition~4.1]{balitskiy2020multiplank}. In some sense our proof of Theorem~\ref{theorem:mainresult} relies on the fact
$P_L\cap B =P\cap B$, where $P$ is the plank with $\mathbf w(P)=\mathbf w$; see the stratification of a multiplank in the general case in \cite[Theorem~4.5]{balitskiy2020multiplank}. 
\end{remark*}

We recall the following problem resembling Theorem~\ref{theorem:mainresult}. It was proved for $\alpha\geq \pi/2$ in~\cite[Theorem~6.1]{bezdek2010covering} but still open for $\alpha< \pi/2$; see also~\cite[Problem~6.2]{Akopyan2012}.

\begin{conjecture}
If a cap of spherical radius $\alpha$ is covered with a
collection of convex spherical domains, 
then the sum inradii of all domains in the collection is at least $\alpha$.
\end{conjecture}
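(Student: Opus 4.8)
The plan is to split the conjecture into the special case where every convex domain is a cap, which follows from Theorem~\ref{theorem:mainresult} by a pruning argument, and the general case, which one would approach by transplanting the Bang-set and farthest-point Voronoi machinery of Section~\ref{section:proof} to the conical setting in $\mathbb R^{d+1}$.

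\emph{The caps case.} Assume caps $C_1,\dots,C_n$ of spherical radii $r_1,\dots,r_n$ cover a cap $C$ of radius $\alpha<\pi/2$ (the range $\alpha\ge\pi/2$ being settled in~\cite{bezdek2010covering}), and discard every $C_i$ with $C_i\cap C=\emptyset$; the remaining collection $\mathcal F$ still covers $C$. The collection $\mathcal F$ is non-separable: if a great sphere $H$ were avoiding for $\mathcal F$, then, since $C\subseteq\bigcup\mathcal F$, it would be disjoint from $C$, and because $\alpha<\pi/2$ this forces $C$ into one open hemisphere $H^+$; then each $C_i\in\mathcal F$, being connected, disjoint from $H$ and meeting $C$, lies in $H^+$ as well, so $H$ does not divide $\mathcal F$. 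Now if $\sum_{C_i\in\mathcal F}r_i\ge\pi/2$ then $\sum_{i=1}^n r_i\ge\pi/2>\alpha$; otherwise Theorem~\ref{theorem:mainresult} yields a cap of radius $\sum_{C_i\in\mathcal F}r_i$ covering $\mathcal F$ and hence $C$, and since a cap of radius $\rho$ contains a cap of radius $\alpha$ only when $\rho\ge\alpha$, we get $\sum_{i=1}^n r_i\ge\sum_{C_i\in\mathcal F}r_i\ge\alpha$.

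\emph{The general case.} For arbitrary convex domains this reduction breaks down — replacing a long, thin domain by its incap destroys the covering — so one must argue directly, and the plan is to imitate the proof of Theorem~\ref{theorem:equivalent}. Passing to $\mathbb R^{d+1}$, write each convex domain $D_i$ (inside an open hemisphere) as $S\cap K_i$ for a convex cone $K_i$, its inradius $r_i$ being witnessed by the largest revolution cone contained in $K_i$, and the cap to be covered as $S\cap K$ for a revolution cone $K$ of half-angle $\alpha$. One then wants to attach to the family $(K_i)$ a substitute for the Bang set $L$ of~\eqref{equation:Bang_set} and show that the farthest-point Voronoi diagram of this set stratifies $\mathbb R^{d+1}\setminus\bigcup_i K_i$, just as $P_L$ does in the Remark above; if $\sum r_i<\alpha$, the cone over some nonempty piece of $S\setminus\bigcup_i D_i$ would then reach into $K$, contradicting that the $D_i$ cover $C$. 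In the language of~\cite{balitskiy2020multiplank} this is the claim that the multiplank generated by the cones $K_i$ cannot contain $K$ once $\sum r_i<\alpha$ — the conical analogue of Kadets' theorem~\cite{Kadets2005} on inradii of coverings of a ball.

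\emph{The main obstacle.} This is the analogue of Bang's Lemma (Lemma~\ref{lemma:farthestpoint}), whose proof turns on the single reflection $\mathbf t\mapsto\mathbf t+2\varepsilon_i\mathbf w_i$ lengthening a vector on the wrong side of the \emph{symmetric} slab $P_i$. A general cone carries no such reflection; one must instead use the reflections across the supporting hyperplanes of the inscribed revolution cone at its (positively spanning) contact points and show that the farthest point of the orbit so generated still leaves $K_i$. Choosing the right replacement for $L$, re-deriving the analogues of the structural identities~\eqref{equation:goal},~\eqref{equation:empty_condition} and~\eqref{equation:include}, and — most delicately — checking that the quantity read off from this Voronoi bookkeeping, after central projection back onto $S$, is genuinely the spherical inradius $r_i$ and not some smaller surrogate, is where the argument would stand or fall; this last point is presumably why the range $\alpha<\pi/2$ is still open.
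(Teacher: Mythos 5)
There is a fundamental mismatch here: the statement you are proving is not a theorem of the paper but a conjecture that the paper explicitly records as \emph{open} for $\alpha<\pi/2$ (the range $\alpha\ge\pi/2$ being due to~\cite{bezdek2010covering}), so there is no proof in the paper to compare against, and your proposal does not close the gap either. Your treatment of the special case where all covering domains are caps is sound: discarding the caps disjoint from $C$, observing that any avoiding great sphere misses $C$ and hence leaves the whole pruned family in the open hemisphere containing $C$, and then invoking Theorem~\ref{theorem:mainresult} (or the trivial bound when the pruned radii sum to at least $\pi/2$) together with the fact that a cap of radius $\rho<\pi/2$ has spherical diameter $2\rho$ does give $\sum r_i\ge\alpha$ when every domain is a cap. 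But this special case does not imply the conjecture, precisely because, as you note yourself, replacing a domain by its incap destroys the covering; the content of the conjecture lies entirely in the general convex case.

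For that general case your text is a research plan, not a proof. The decisive step --- an analogue of Bang's Lemma (Lemma~\ref{lemma:farthestpoint}) for the cones $K_i$ --- is only named as ``the main obstacle'': the reflection $\mathbf t\mapsto\mathbf t+2\varepsilon_i\mathbf w_i$ that drives the paper's argument exists because each $P_i$ is a centrally symmetric slab, and you give no substitute set $L$, no lengthening mechanism for the proposed reflections across supporting hyperplanes of the inscribed revolution cone, and no derivation of the analogues of~\eqref{equation:goal}, \eqref{equation:empty_condition}, \eqref{equation:include}. You also flag, correctly, that it is unclear whether the quantity produced by such a Voronoi/multiplank bookkeeping would equal the spherical inradius rather than a smaller surrogate; that unresolved point is exactly where a proof would have to do real work, and it is presumably why the conjecture remains open. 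So the proposal should be read as: a correct reduction of the caps-only case to Theorem~\ref{theorem:mainresult}, plus an honest but incomplete sketch of a possible attack on the actual statement.
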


We finish with the following conjecture generalizing Theorems~\ref{theorem:tothconjecture} and~\ref{theorem:equivalent} posed by Maxim Didid.

\begin{conjecture}
\label{conjecture:didin}
Let \( Z_1, \dots, Z_n \subset S \) be zones of width \( \beta_1, \dots, \beta_n \), respectively. If \( S \setminus \bigcup_{i=1}^n Z_i \) consists of convex connected components \( L_1, \dots, L_{2m}\) with inradius \(\gamma_1, \dots, \gamma_{2m}\), respectively, then \(\beta_1+\dots + \beta_n+\gamma_1+\dots+\gamma_{2m}\geq 2\pi \).
\end{conjecture}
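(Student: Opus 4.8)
The plan is to run the plank/multiplank machinery of Section~\ref{section:proof} without the simplifying hypothesis that \( S\setminus\bigcup_i Z_i \) has at most one antipodal pair of components. I would first dualize exactly as in the proof of Theorem~\ref{theorem:equivalent}: set \( \alpha_i=\beta_i/2 \), pick symmetric planks \( P_i \) with \( |\mathbf w_i|=\sin\alpha_i \) and \( Z_i=\overline{S\cap P_i} \), and form the Bang set \( L=\{\sum_i\pm\mathbf w_i\} \). Then \( \mathbb R^d\setminus P_L \), the complement of the multiplank, is a disjoint union of closed strata; restricted to the unit ball \( B \) these are the cells \( A_{\mathbf x} \), \( \mathbf x\in L \), and by Bang's Lemma (Lemma~\ref{lemma:farthestpoint}) the central projection \( \pr \) sends each \( A_{\mathbf x}\cap B \) into \( S\setminus\bigcup_i Z_i \), with distinct cells separated by some \( P_i \) and hence mapped into distinct components. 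Already the first step — identifying, via the antipodal symmetry \( \mathbf x\mapsto-\mathbf x \), the projections of these cells with the full list of components \( L_1,\dots,L_{2m} \) — is nontrivial in general position (in Section~\ref{section:proof} only \( A_{\pm\mathbf w} \) survived), and amounts to the stratification of a multiplank on the sphere alluded to in the Remark above.

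Next I would read off each inradius from the plank picture. A nonempty cell \( A_{\mathbf x} \), \( \mathbf x=\sum_i\varepsilon_i\mathbf w_i \), lies inside \( M_{\mathbf x}=\bigcap_i\{\mathbf y:\langle\mathbf y,-\varepsilon_i\mathbf w_i\rangle\ge\langle\mathbf w_i,\mathbf w_i\rangle\} \), an intersection of half-spaces each at Euclidean distance \( \sin\alpha_i \) from the origin; correspondingly \( \pr(A_{\mathbf x}\cap B) \) is a spherical convex polytope whose inradius \( \gamma \) is the spherical radius of the cap that \( S \) cuts out of the largest Euclidean ball of \( B \) inscribed in \( A_{\mathbf x} \). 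The goal of this step is a clean estimate for \( \gamma \) in terms of the \( \mathbf w_i \) whose half-spaces actually bound \( A_{\mathbf x}\cap B \), generalizing (\ref{equation:goal}), in which \( A_{\mathbf w} \) is a single half-space cap and \( \gamma=\pi/2-\arcsin|\mathbf w| \), in agreement with the extremal configurations of Theorem~\ref{theorem:equivalent}.

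The hard part — the step I expect to need a genuinely new idea — is the global inequality of Conjecture~\ref{conjecture:didin}, which must combine the per-cell inradius bounds with the widths \( \beta_i \). A naive volume identity (\( \vol \) of the zones plus \( \vol \) of the components equals \( \vol S \)) only controls a sum of \emph{areas}, never of widths and inradii, so I would instead look for either an integral-geometric device — integrating over great subspheres a count of how many zones and how many components each one meets, in the spirit of a Crofton formula — or a Bang-type monotone potential on \( L \) that simultaneously charges each plank \( P_i \) by \( \beta_i \) and each cell by its projected inradius and telescopes to the desired bound. Any such argument has to reproduce at once the two known regimes, Theorem~\ref{theorem:tothconjecture} (no components) and Theorem~\ref{theorem:equivalent} (a single antipodal pair of components); and the tempting shortcut — induction on \( n \) by deleting one zone at a time, say via Lemma~\ref{mainlemma:equivalent} — founders because deleting a zone merges components and changes their inradii in a way no simple bookkeeping controls. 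Turning the multiplank stratification into this sharp additive inequality is exactly what is missing, which is why Conjecture~\ref{conjecture:didin} is still open.
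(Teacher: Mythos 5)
There is nothing to compare here: the statement you are addressing is posed in the paper purely as an open conjecture (attributed to Maxim Didin), and the paper contains no proof of it. Your text, read as a proof attempt, does not close the conjecture either — and, to your credit, you say so explicitly. What you supply is the same setup the paper already has implicitly available (dualizing zones to symmetric planks $P_i$ with $|\mathbf w_i|=\sin\alpha_i$, forming the Bang set $L$, invoking Lemma~\ref{lemma:farthestpoint} to place each cell $A_{\mathbf x}\cap B$ outside $\bigcup_i P_i$, and noting via the separation by individual planks that distinct nonempty cells project into distinct components of $S\setminus\bigcup_i Z_i$), plus a wish list for the two genuinely missing ingredients: (i) a correspondence between the nonempty cells of the multiplank stratification and the full list of components $L_1,\dots,L_{2m}$, together with a per-cell lower bound on the spherical inradius of $\pr(A_{\mathbf x}\cap B)$ generalizing~(\ref{equation:goal}); and (ii) a global mechanism (Crofton-type integral geometry or a Bang-type potential) that converts those per-cell bounds and the widths $\beta_i$ into the additive inequality $\sum\beta_i+\sum\gamma_j\geq 2\pi$. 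Neither ingredient is carried out, and (ii) in particular is exactly where the difficulty lies, since the paper's own argument only ever needs to exhibit \emph{one} large cell ($A_{\pm\mathbf w}$) and never has to account for all components simultaneously.

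Two further cautions about the sketch itself. First, components of $S\setminus\bigcup_i Z_i$ need not receive any cell at all: many $A_{\mathbf x}$ are empty (in the paper's setting all but two are, by~(\ref{equation:empty_condition})), so a cell-by-cell accounting can silently drop components, which is harmless for a lower bound only if the bookkeeping in step (ii) is arranged so that missed components contribute nonnegatively — this needs to be said and checked. Second, your per-cell estimate in the $A_{\mathbf w}$ case, $\gamma=\pi/2-\arcsin|\mathbf w|$, relies on $A_{\mathbf w}$ being cut by a single half-space; for a general cell bounded by several half-spaces $\{\langle\mathbf y,-\varepsilon_i\mathbf w_i\rangle\geq\langle\mathbf w_i,\mathbf w_i\rangle\}$ the inradius of the spherical polytope $\pr(A_{\mathbf x}\cap B)$ has no comparably clean closed form, and it is not clear what inequality one should even aim for so that the sum over cells telescopes against $\sum\beta_i$. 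So the proposal is an honest research program consistent with the paper's machinery, but it proves nothing beyond what Theorem~\ref{theorem:equivalent} already gives, and the conjecture remains open exactly where you say it does.
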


\bibliographystyle{alpha}
\bibliography{references}

\end{document}